\documentclass[11pt,a4paper]{article}
\usepackage[utf8]{inputenc}
\usepackage[english]{babel}
\usepackage{amsmath,amssymb,amsthm}
\usepackage{geometry}
\numberwithin{equation}{section}

\newtheorem{theorem}{Theorem}[section]
\newtheorem{lemma}[theorem]{Lemma}
\newtheorem{remark}[theorem]{Remark}

\title{\textbf{Giaquinta's example revisited: the Liouville property for a class of nonuniformly elliptic equations in low dimensions}}
\author{Michael Bildhauer \quad Martin Fuchs}
\date{}

\begin{document}

\maketitle

\begin{abstract}
We consider entire solutions $u$: $\mathbb{R}^n \rightarrow \mathbb{R}$, $n \ge 2$, of the equation $\operatorname{div}(\nabla f(\nabla u)) = 0$ 
with convex density $f$: $\mathbb{R}^n \rightarrow \mathbb{R}$ given for example by $f(\nabla u) = \sum_{i=1}^n |\partial_i u|^{p_i}$ with 
exponents $p_i \ge 2$ including Giaquinta's example for the choice $p_1 = \dots = p_{n-1} = 2$, $p_n = 4$. If $n \le 2 + \min\{p_i\}$ holds, then we have the Liouville property: the boundedness of $u$ implies its constancy.

\medskip
\noindent \textbf{Mathematics Subject Classification.} 35J60, 35J20.

\noindent \textbf{Keywords.} Nonlinear elliptic equations, anisotropic growth, entire solutions, Liouville property.
\end{abstract}

\section{Introduction}

In 1987 Giaquinta \cite{Gia1987} showed that the function
\begin{equation*}
u(x) := \sqrt{\frac{n-4}{24}} \frac{x_n^2}{\left(\sum_{i=1}^{n-1} x_i^2\right)^{1/2}}, \quad x \in B := \{y \in \mathbb{R}^n : |y| < 1\},
\end{equation*}
for dimensions $n \ge 6$ has the property that
\begin{equation}
I[u] := \int_B \left( \sum_{i=1}^{n-1} (\partial_i u)^2 + \frac{1}{2}(\partial_n u)^4 \right) dx < \infty
\end{equation}
and in addition is a weak solution of the Euler-Lagrange equation of the variational integral $I$ minimizing with respect to its boundary data. 
Obviously, $u$ is an unbounded function. In contrast, Cupini, Marcellini and Mascolo proved in Theorem 1.1 of \cite{CMM2009}, 
that in the particular case of the functional $I$ defined in (1.1) local minimizers are locally bounded, if the case $n < 5$ is considered. Let us mention that in the paper actually very general functionals even with variable growth rates are discussed exhibiting conditions on the exponents and the dimension under which $u \in L_{\operatorname{loc}}^\infty$ holds for locally minimizing functions $u$.

As outlined for instance by Frehse \cite{Fre1979, Fre1977}, and discussed later by Giaquinta and Nečas \cite{GN1980} and Kawohl \cite{Kaw1980}, 
it is a generally accepted principle that a regularity theory holds for those kinds of equations for which a Liouville theorem is true. 
Motivated by Giaquinta's example the goal of our short note is to show that in the case of the 
functional (1.1) Liouville's theorem holds for $n \le 4$. More generally, suppose that
\begin{equation}
f(P) := \sum_{i=1}^n f_i(P_i), \quad P = (P_1, \dots, P_n) \in \mathbb{R}^n
\end{equation}
with functions $f_i \in C^2(\mathbb{R})$ satisfying
\begin{equation}
f_i'(0) = 0, \quad \lambda_i |t|^{p_i-2} \le f_i''(t) \le \Lambda_i |t|^{p_i-2} \quad t \in \mathbb{R}
\end{equation}
with constants $0 < \lambda_i \le \Lambda_i$ and with exponents
\begin{equation}
p_i \ge 2
\end{equation}
for any index $i \in \{1, \dots, n\}$. Then we have

\begin{theorem}
Let (1.2)--(1.4) hold and assume that $u \in C^2(\mathbb{R}^n)$ is a solution of
\begin{equation}
\operatorname{div}[\nabla f(\nabla u)] = \sum_{i=1}^n \partial_i (f_i'(\partial_i u)) = 0 \quad \text{on } \mathbb{R}^n.
\end{equation}
Suppose that we have for the exponents $p_i$
\begin{equation}
n \le 2 + \min \{p_i : i = 1, \dots, n\}.
\end{equation}
Then the Liouville property holds for equation (1.5), which means that bounded solutions must be constant.
\end{theorem}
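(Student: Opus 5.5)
The plan is to combine two energy (Caccioppoli-type) estimates: one for the equation (1.5) itself and one for its differentiated version. Together they show that a suitable weighted energy of $u$ is bounded on $\mathbb{R}^n$ exactly under the dimension restriction (1.6). Boundedness of this energy, fed back into the first estimate by a hole-filling argument, then forces $\nabla u\equiv 0$. Throughout, $M:=\sup|u|<\infty$ and $p:=\min_i p_i$. Fix a cut-off $\eta\in C_0^\infty(B_{2R})$ with $\eta\equiv1$ on $B_R$ and $|\nabla\eta|\le c/R$.

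\emph{Step 1 (first-order estimate).} I will test (1.5) with $\eta^{k}u$, where $k\ge\max_i p_i$ is large. Since $f_i'(0)=0$, (1.3) gives $f_i'(t)t\ge c\lambda_i|t|^{p_i}$ and $|f_i'(t)|\le c\Lambda_i|t|^{p_i-1}$. Young's inequality applied to each term $\eta^{k-1}|\partial_i\eta|\,|u|\,|\partial_iu|^{p_i-1}$ then yields
\[
D(R):=\int_{B_R}\sum_{i=1}^n|\partial_iu|^{p_i}\,dx\le C\sum_{i=1}^n M^{p_i}R^{n-p_i}.
\]
More precisely, with $u$ replaced by $u-c_R$ for a suitable constant $c_R$, the right-hand side is only an integral over the annulus $T_R:=B_{2R}\setminus B_R$. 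This alone settles $n<p$, but the statement allows $n\le p+2$, so two more powers of $R$ must be gained.

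\emph{Step 2 (second-order estimate).} Since $u\in C^2$, I will use the differentiated equation $\sum_i\partial_i\big(f_i''(\partial_iu)\partial_i\partial_ku\big)=0$ in the weak sense, test it with $\eta^2\partial_ku$ for each $k$, and sum over $k$. Cauchy–Schwarz with respect to the weights $f_i''(\partial_iu)$ gives
\[
\int_{B_R}\sum_{i,k}f_i''(\partial_iu)(\partial_i\partial_ku)^2\,dx\le \frac{C}{R^2}\int_{T_R}\sum_{i,k}|\partial_iu|^{p_i-2}(\partial_ku)^2\,dx .
\]
The right-hand side is then estimated by Young's inequality with exponents $p_i/(p_i-2)$ and $p_i/2$, giving the bound
\[
\frac{C}{R^2}\int_{B_{2R}}\sum_i\big(|\partial_iu|^{p_i}+|\nabla u|^{p_i}\big)\,dx .
\]
This is where I expect the main obstacle. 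The mixed terms $|\partial_iu|^{p_i-2}(\partial_ku)^2$ with $p_k<p_i$ are not controlled by $D(2R)$ directly. I would first try to treat them by testing with $\eta^2 f_k'(\partial_ku)$ instead of $\eta^2\partial_ku$, so that each $k$ carries its own growth $p_k$. Failing that, I would test with the weighted choices $\eta^2|\partial_ku|^{s}\partial_ku$ and choose $s$ to balance the exponents. Granting this, Step 1 makes the right-hand side $O(R^{n-p-2})$, which is $O(1)$ exactly when (1.6) holds. Hence the second-order weighted energy is finite on $\mathbb{R}^n$.

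\emph{Step 3 (conclusion).} I will return to the hole-filling form of Step 1. After Hölder's inequality, the annulus integral $R^{-1}\int_{T_R}|u-c_R|\,|\partial_iu|^{p_i-1}$ is controlled by quantities involving $\nabla(f_i'(\partial_iu))$ on $T_R$. Indeed, $|\partial_j f_i'(\partial_iu)|^2\le\Lambda_i|\partial_iu|^{p_i-2}\,f_i''(\partial_iu)(\partial_i\partial_ju)^2$, so these quantities are controlled through Step 2 by the finite second-order energy restricted to $T_R$. A Poincaré inequality on $T_R$ supplies the missing factor of $R$. Since the total second-order energy is finite, its tail over $T_R$ tends to $0$ as $R\to\infty$, and therefore $D(R)\to0$ along a sequence $R\to\infty$. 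Because $D$ is nondecreasing, it follows that $\nabla u\equiv0$, so $u$ is constant. The borderline case $n=p+2$ is precisely the one where this tail argument (or, alternatively, a logarithmic cut-off) is needed instead of plain decay.
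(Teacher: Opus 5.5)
\textbf{Step 2 is the gap.} It carries the whole theorem beyond the easy range $n<p$, and you only grant it.

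As you set it up (differentiate in every direction $k$, test with $\eta^2\partial_k u$, sum over $k$), the terms $|\partial_i u|^{p_i-2}(\partial_k u)^2$ are not dominated by $\sum_j|\partial_j u|^{p_j}$ in \emph{either} regime. For $p_k<p_i$ they fail at large values, as you note. For $p_i<p_k$ they also fail at small values: $p_i=2$ produces $(\partial_k u)^2$, which is not $\le c|\partial_k u|^{p_k}$ as $\partial_k u\to0$. This is exactly Giaquinta's case with $k=n$.

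Neither fallback repairs this.
\begin{itemize}
\item With $\eta^2|\partial_k u|^s\partial_k u$ the terms become $|\partial_i u|^{p_i-2}|\partial_k u|^{2+s}$. Testing against $|\partial_i u|\sim|\partial_k u|^{p_k/p_i}$ shows that pointwise domination needs $2+s\ge 2p_k/p_i$ for small $|\partial_k u|$ and $2+s\le 2p_k/p_i$ for large $|\partial_k u|$, for every $i$ including $i=k$. So no single power works unless all $p_i$ coincide. For a direction $k$ whose exponent is not maximal, the large-value condition (with $i$ the index of the largest exponent) even forces $s<0$.
\item Testing with $\eta^2 f_k'(\partial_k u)$ is essentially $s=p_k-2$. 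It already fails at $i=k$ as soon as $p_k>2$, producing $|\partial_k u|^{2p_k-2}$.
\end{itemize}

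\textbf{The missing idea.} The paper differentiates \emph{only} in the direction $x_n$ of the largest exponent and uses a two-regime weight. It tests with $\eta^2\partial_n u\,\Phi(\partial_n u)$, where $\Phi(t)=t^{2m}$ on $[-1,1]$ with $(m+1)p_1\ge p_n$, $\Phi\equiv2$ for $|t|\ge2$, and $t\Phi'(t)\ge0$, so the term containing $\Phi'$ has a good sign and is dropped. The high power handles small values. Boundedness of $\Phi$ together with maximality of $p_n$ handles large values. This gives $|\partial_k u|^{p_k-2}|\partial_n u|^2\Phi(\partial_n u)\le c(|\partial_k u|^{p_k}+|\partial_n u|^{p_n})$, and with your Step 1 one gets finiteness of $\int_{\mathbb{R}^n}D^2f(\nabla u)(\nabla\partial_n u,\nabla\partial_n u)\Phi(\partial_n u)\,dx$ when $n\le 2+p_{\min}$.

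\textbf{Step 3 does not close either.} The bound $|\partial_j f_i'(\partial_i u)|^2\le\Lambda_i|\partial_i u|^{p_i-2}f_i''(\partial_i u)(\partial_i\partial_j u)^2$ carries an extra unbounded factor. After H\"older this factor must be paid for with $\int_{T_R}|\partial_i u|^{p_i}\lesssim R^{n-p_i}$. Already in the isotropic case $p_i\equiv p$, the natural H\"older--Poincar\'e bookkeeping gives only $D(R)\lesssim E(T_R)^{1/2}R^{(n-p+2)/2}$, where $E(T_R)$ is the second-order energy on $T_R$; this ignores the mean of $f_i'(\partial_i u)$, which you would also have to subtract. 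At $n=p+2$ that is $o(R^2)$, not $o(1)$. The Poincar\'e gain is eaten by converting $\nabla f_i'(\partial_i u)$ back into second-order energy.

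\textbf{How the paper concludes instead.} It never returns to the first-order energy. It applies the unsquared inequality (2.2) with the same weight: the weighted energy on $B_R$ is bounded by the square root of its tail on $B_{2R}\setminus B_R$, which tends to $0$, times a bounded factor. Hence the weighted energy vanishes identically. On $[\partial_n u\ne0]$ one has $\Phi(\partial_n u)>0$. Using $\nabla\partial_i u=0$ a.e.\ on $[\partial_i u=0]$ to handle the degeneracy of $f_i''$, one gets $\nabla\partial_n u\equiv0$, i.e.\ $\partial_n u\equiv c_n$.

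Since this yields only one direction, the paper restricts $u$ to $\{x_n=0\}$, which solves an equation of the same type in $\mathbb{R}^{n-1}$, and iterates. This dimension reduction is absent from your plan. It cannot be replaced by summing over $k$, because the large-value condition fails for non-maximal directions as explained above.
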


\begin{remark}
In the case of Giaquinta's example we have $p_1 = \dots = p_{n-1} = 2$, $p_n = 4$. For (1.6) we need 
$n \le 4$, which is in correspondence to the regularity result of \cite{CMM2009}. 
We do not know, if for the particular density $f$ from (1.1) the Liouville property is true in dimension 5.
\end{remark}

\begin{remark}
Let the assumptions of Theorem 1.1 hold with the exception of (1.6). We will show that the hypothesis
\begin{equation}
\limsup_{|x| \rightarrow \infty} \frac{|u(x)|}{|x|^\beta} < \infty
\end{equation}
for the choice ($p_{\min} := \min \{p_i : i = 1, \dots, n\}$)
\begin{equation}
\beta := \frac{1}{p_{\min}} (p_{\min} + 2 - n)
\end{equation}
implies that the entire solution $u$ is an affine function. Now, if (1.6) is valid, we have $\beta \ge 0$, and if in addition we assume the boundedness of $u$, then (1.7) holds yielding $\nabla^2 u = 0$ which clearly implies the constancy of $u$.

In the case $n=2$ we get $\beta = 1$, which means: 
if $u \in C^2(\mathbb{R}^2)$ is an entire solution of equation (1.5) under the conditions (1.2)--(1.4), for which
\begin{equation*}
\limsup_{|x| \rightarrow \infty} |u(x)| / |x| < \infty,
\end{equation*}
then $u$ must be affine.
\end{remark}

\begin{remark}
Now we want to replace (1.3) by the requirement that
\[
\lambda_i (1+t^2)^{\frac{p_i-2}{2}} \le f_i''(t) \le \Lambda_i (1+t^2)^{\frac{p_i-2}{2}} \quad t \in \mathbb{R}
\]
holds for the functions $f_i$ (nondegenerate case). The result of Theorem 1.1 does not extend to this different setting: (1.6) has to be replaced by the stronger condition $n \le 4$, and in Remark 1.3 one has to work with the value $\beta = \frac{1}{2}(4-n)$ instead of (1.8), which means that the exponents $p_i \ge 2$, $i = 1, \dots, n$, do not explicitly occur. The interested reader is referred to the forthcoming paper \cite{BF_forth}.
\end{remark}

\section{Proof of Theorem 1.1}

We start with an inequality of Caccioppoli type, in which $\Phi : \mathbb{R} \rightarrow \mathbb{R}$ denotes a function of class $C^1$ with the properties
\begin{equation}
\begin{cases}
\Phi(t) = t^{2m} \text{ on } [-1,1] \quad \Phi(t) = 2 \text{ if } |t| \ge 2 \text{ and} \\
t \Phi'(t) \ge 0 \text{ for any } t \in \mathbb{R},
\end{cases}
\end{equation}
where $m \in \mathbb{N}$ will be fixed later.

\begin{lemma}
Let $F$: $\mathbb{R}^n \rightarrow \mathbb{R}$ be a convex function of class $C^2$ and consider an entire solution $v \in C^2(\mathbb{R}^n)$ of the equation $\operatorname{div}(\nabla F(\nabla v)) = 0$. Then we have for $i = 1, \dots, n$ and $\eta \in C_0^1(\mathbb{R}^n)$ such that $0 \le \eta \le 1$
\begin{eqnarray}
\lefteqn{\int_{\operatorname{spt}(\eta)} D^2 F(\nabla v)(\nabla \partial_i v, \nabla \partial_i v) \, \Phi(\partial_i v) \eta^2 \, dx} \nonumber \\[2mm]
&\leq & c \, \bigg(  \int_{\operatorname{spt}(\nabla \eta)} D^2 F(\nabla v)(\nabla \partial_i v, \nabla \partial_i v) \, \Phi(\partial_i v) \eta^2 \, dx \bigg)^{1/2} 
\cdot \nonumber \\[2mm]
&& \bigg( \int_{\operatorname{spt}(\nabla \eta)} D^2 F(\nabla v)(\nabla \eta, \nabla \eta) |\partial_i v|^2 \Phi(\partial_i v) \, dx \bigg)^{1/2}, \\[4mm]
&& \int_{\operatorname{spt}(\eta)} D^2 F(\nabla v)(\nabla \partial_i v, \nabla \partial_i v) \, \Phi(\partial_i v) \eta^2 \, dx \le \label{eq:lemma21_final} \\[2mm]
&&c \,\int_{\operatorname{spt}(\nabla \eta)} D^2 F(\nabla v)(\nabla \eta, \nabla \eta) |\partial_i v|^2 \Phi(\partial_i v) \, dx, \nonumber
\end{eqnarray}
$c$ denoting a constant independent of $v$ and $\eta$.
\end{lemma}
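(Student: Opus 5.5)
We differentiate the equation and test it with a suitable function. Since $v\in C^2$ only, we first work formally, or with difference quotients if needed. We may assume $F\in C^2$, so that $\partial_i$ applied to $\operatorname{div}(\nabla F(\nabla v))=0$ gives, in the weak sense,
\[
\int_{\mathbb{R}^n} D^2F(\nabla v)(\nabla\partial_i v,\nabla\varphi)\,dx=0\quad\text{for all }\varphi\in C^1_0(\mathbb{R}^n).
\]
This is justified by testing the original weak equation with $\partial_i\varphi$ and integrating by parts, since $\nabla F(\nabla v)\in C^1$. We then insert $\varphi=\eta^2\,\partial_i v\,\Phi(\partial_i v)$, which is $C^1_0$ because $\Phi\in C^1$ (if needed, approximate $\eta$ by smooth functions). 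Its gradient is
\[
\nabla\varphi=\eta^2\big(\Phi(\partial_i v)+\partial_i v\,\Phi'(\partial_i v)\big)\nabla\partial_i v+2\eta\,\partial_i v\,\Phi(\partial_i v)\nabla\eta .
\]

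Set $B(\xi,\zeta)=D^2F(\nabla v)(\xi,\zeta)$; by convexity $B$ is a nonnegative symmetric bilinear form. The identity gives
\[
\int \eta^2\big(\Phi+\partial_i v\,\Phi'\big)B(\nabla\partial_i v,\nabla\partial_i v)\,dx=-2\int \eta\,\partial_i v\,\Phi\,B(\nabla\partial_i v,\nabla\eta)\,dx .
\]
The property $t\Phi'(t)\ge0$ in (2.1) lets us drop the $\Phi'$-term on the left from below, so the left side is at least the left-hand side of the lemma. On the right, the integrand vanishes outside $\operatorname{spt}(\nabla\eta)$. We use the Cauchy--Schwarz inequality for the nonnegative form $B$ pointwise, $|B(\xi,\zeta)|\le B(\xi,\xi)^{1/2}B(\zeta,\zeta)^{1/2}$, and write $\Phi=\Phi^{1/2}\Phi^{1/2}$; note that $\Phi\ge0$, since $\Phi(0)=0$ and $\Phi$ is monotone on each half-line. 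Then Hölder's inequality in $L^2$ bounds the right side by
\[
2\Big(\int_{\operatorname{spt}\nabla\eta}B(\nabla\partial_iv,\nabla\partial_iv)\Phi\,\eta^2\Big)^{1/2}\Big(\int_{\operatorname{spt}\nabla\eta}B(\nabla\eta,\nabla\eta)|\partial_iv|^2\Phi\Big)^{1/2},
\]
which is the first inequality with $c=2$.

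For the second inequality we bound the first factor on the right by the full integral over $\operatorname{spt}(\eta)$, which is the left-hand side $L$. This gives $L\le 2L^{1/2}R^{1/2}$, where $R$ is the integral $\int_{\operatorname{spt}\nabla\eta}B(\nabla\eta,\nabla\eta)|\partial_iv|^2\Phi\,dx$. If $L=0$ there is nothing to prove; otherwise we divide by $L^{1/2}$ and get $L\le 4R$. All integrals are finite because $\eta$ has compact support and everything is continuous.

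The only delicate point is regularity. Differentiating the equation needs $\nabla F(\nabla v)\in C^1$, which holds since $F\in C^2$ and $v\in C^2$. Alternatively, we can avoid differentiating the equation altogether: test the weak form with $\partial_i\varphi$, which requires $\varphi\in C^2_0$, and mollify $\Phi$ and $\eta$ first, passing to the limit afterwards. I expect this justification, not the estimate itself, to be the main technical obstacle.
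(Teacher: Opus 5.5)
Your proposal is correct and follows essentially the same route as the paper. You test the differentiated equation with $\eta^2\,\partial_i v\,\Phi(\partial_i v)$, discard the $\partial_i v\,\Phi'(\partial_i v)$ term using $t\Phi'(t)\ge 0$ and convexity, and then apply Cauchy--Schwarz for the form $D^2F(\nabla v)$ together with H\"older to obtain (2.2) with $c=2$. Your absorption step for (2.3), dividing by $L^{1/2}$ after noting $L<\infty$, is the same argument the paper carries out via Young's inequality, and your justification of the differentiated equation through $\nabla F(\nabla v)\in C^1$ is adequate.
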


\begin{proof}[Proof of Lemma 2.1]
Clearly (2.3) follows from (2.2) by Young's inequality. For (2.2) we observe
\[
\int_{\operatorname{spt}(\eta)} D^2 f( \nabla v)\big(\nabla \partial_i v,\nabla[ \eta^2 \partial_i v \, \Phi(\partial_i v)]\big) \, dx = 0
\]
and therefore
\begin{eqnarray*}
\lefteqn{\int_{\operatorname{spt}(\eta)} D^2 F(\nabla v)(\nabla \partial_i v, \nabla \partial_i v) \eta^2 \Phi(\partial_i v) \, dx} \\
&&+ \int_{\operatorname{spt}(\eta)} D^2 F(\nabla v)(\nabla \partial_i v, \nabla \partial_i v) \eta^2 \Phi'(\partial_i v) \partial_i v \, dx \\
&= &-2 \int_{\operatorname{spt}(\nabla \eta)} D^2 F(\nabla v)(\nabla \partial_i v, \nabla \eta) \eta \partial_i v \, \Phi(\partial_i v) \, dx.
\end{eqnarray*}
By (2.1) we have that $\partial_i v \, \Phi'(\partial_i v) \ge 0$, hence
\begin{align*}
&\int_{\operatorname{spt}(\eta)} D^2 F(\nabla v)(\nabla \partial_i v, \nabla \partial_i v) \eta^2 \Phi(\partial_i v) \, dx \le \\
&-2 \int_{\operatorname{spt}(\nabla \eta)} D^2 F(\nabla v)(\nabla \partial_i v, \nabla \eta) \eta \partial_i v \, \Phi(\partial_i v) \, dx.
\end{align*}
Using the Cauchy-Schwarz inequality (in terms of the bilinear form $D^2 F(\nabla v)$) on the right-hand side of the last inequality and applying Hölder's inequality (observing that $\Phi \ge 0$ during the calculations), we get (2.2).
\end{proof}

In accordance with Remark 1.3 we assume that we are in the situation of Theorem 1.1, but with (1.6) being replaced by (1.7), 
where $\beta$ is taken from (1.8). After renumbering the variables $x_1, \dots, x_n$ we may assume that (recall (1.4))
\begin{equation}
2 \le p_1 \le p_2 \le \dots \le p_n
\end{equation}
holds. In (2.1) we choose $m \in \mathbb{N}$ such that
\begin{equation}
p_n \le (m+1) p_1.
\end{equation}
Consider a ball $B_{2R} := \{x \in \mathbb{R}^n : |x| < 2R\}$, choose $\eta \in C_0^1(B_{2R})$ such that 
$0 \le \eta \le 1$, $\eta = 1$ on $B_R$, $|\nabla \eta| \le c/R$, and apply (2.3) with the choices $u$, $f$ and $n$ replacing $v$, $F$ and $i$. 
Noting that (recall (1.2))
\begin{equation*}
D^2 f(P)(Q,Q) = \sum_{k=1}^n f_k''(P_k) Q_k^2 \quad P, Q \in \mathbb{R}^n
\end{equation*}
we deduce with the help of (1.3)
\begin{equation}
\text{r.h.s. of (2.3)} \le \label{eq:rhs_23_bound} 
c \int_{B_{2R}-B_R} \sum_{k=1}^n |\partial_k u|^{p_k-2} |\partial_k \eta|^2 \Phi(\partial_n u) |\partial_n u|^2 \, dx.
\end{equation}

Next we claim that for $k = 1, \dots, n$ it holds
\begin{equation}
a_k := |\partial_k u|^{p_k-2} |\partial_n u|^2 \Phi(\partial_n u) \le \label{eq:ak_bound} 
c \left[ |\partial_k u|^{p_k} + |\partial_n u|^{p_n} \right] 
\end{equation}
which for $k = n$ is immediate on account of $0 \le \Phi \le 2$ (compare (2.1)).\\ 

So let $k < n$ and observe that on the set $[|\partial_n u| \le 1]$ we have (again by (2.1))
\begin{equation*}
a_k = |\partial_k u|^{p_k-2} |\partial_n u|^{2+2m}.
\end{equation*}

If $p_k = 2$, then (2.5) shows $p_n \le 2(m+1)$, hence $a_k \le |\partial_n u|^{p_n}$ giving (2.7).\\

In the case $p_k > 2$ Young's inequality yields (noting that $(m+1)p_k \ge p_n$)
\begin{align*}
a_k &\le c \left( |\partial_k u|^{p_k} + |\partial_n u|^{(m+1)p_k} \right) \le\ c \left( |\partial_k u|^{p_k} + |\partial_n u|^{p_n} \right)
\end{align*}
thus we have (2.7) on the set $[|\partial_n u| \le 1]$.\\ 

On $[|\partial_n u| > 1]$ we have $\Phi(\partial_n u) \le 2$, so that
\begin{align*}
a_k &\le 2 |\partial_k u|^{p_k-2} |\partial_n u|^2
\end{align*}
and in conclusion $a_k \le 2 |\partial_n u|^2 \le 2 |\partial_n u|^{p_n}$, if the case $p_k = 2$ is considered.\\

Otherwise we again use Young's inequality to get
\begin{align*}
a_k &\le c \left( |\partial_k u|^{p_k} + |\partial_n u|^{p_k} \right) \ \le c \left( |\partial_k u|^{p_k} + |\partial_n u|^{p_n} \right)
\end{align*}
where we have used (2.4). This proves (2.7).\\

We apply (2.7) to inequality (2.6) with the result (using the properties of $\eta$ specified after (2.5))
\begin{equation}
\int_{B_R} D^2 f(\nabla u)(\nabla \partial_n u, \nabla \partial_n u) \Phi(\partial_n u)  dx \le c R^{-2} \int_{B_{2R}-B_R} \sum_{k=1}^n |\partial_k u|^{p_k}  dx.
\end{equation}
Let $\eta \in C_0^1(T_R)$, $T_R := B_{\frac{5}{2}R} - B_{\frac{R}{2}}$, such that $0 \le \eta \le 1$, 
$\eta = 1$ on $B_{2R}-B_R$ and $|\nabla \eta| \le c/R$. Clearly for $l \in \mathbb{N}$, it holds on account of (1.3) and (1.5)
\begin{eqnarray*}
\int_{T_R} \eta^{2l} \sum_{i=1}^n |\partial_i u|^{p_i}  dx 
& \le &c \int_{T_R} \eta^{2l} \nabla f(\nabla u) \cdot \nabla u    dx\\[2mm]  
& =& -2l  c \int_{T_R} \eta^{2l-1} \nabla \eta \cdot \nabla f(\nabla u) u dx\\[2mm]
&\le& \ c(l) \int_{T_R} \eta^{2l-1} 
\sum_{i=1}^n |\partial_i u|^{p_i-1} |\partial_i \eta|  |u|  dx .
\end{eqnarray*}
By Young's inequality we have
\begin{equation*}
\eta^{2l-1} |\partial_i u|^{p_i-1} |u| |\partial_i \eta| \le 
\varepsilon  \eta^{(2l-1)\frac{p_i}{p_i-1}} |\partial_i u|^{p_i} + c(\varepsilon) |\nabla \eta|^{p_i} |u|^{p_i}
\end{equation*}
and in addition $(2l-1)\frac{p_i}{p_i-1} \ge 2l$, if $l$ is large enough. Thus we end up with
\begin{equation}
\int_{T_R}\eta^{2l} \ \sum_{i=1}^n |\partial_i u|^{p_i}  dx \le c \sum_{i=1}^n R^{-p_i} 
\int_{\operatorname{spt}(\nabla \eta)} |u|^{p_i}  dx.
\end{equation}
On the right-hand side of (2.9) we use (1.7), which gives
\begin{equation*}
\sum_{i=1}^n R^{-p_i} \int_{\operatorname{spt}(\nabla \eta)} |u|^{p_i}  dx \le c \sum_{i=1}^n R^{n-p_i} R^{p_i \beta}
\end{equation*}
with $\beta$ from (1.8). Inserting this into (2.9) and returning to (2.8) (recall the choice of $\eta$), it follows
\begin{equation*}
\int_{B_R} D^2 f(\nabla u)(\nabla \partial_n u, \nabla \partial_n u) \Phi(\partial_n u)  dx \le c \sum_{i=1}^n R^{n-2-p_i+p_i \beta}.
\end{equation*}
Observing that $\beta \le 1$, we have
\begin{equation*}
n-2+p_i(\beta-1) \le n-2+p_{\min}(\beta-1) = 0
\end{equation*}
and the foregoing estimate implies (letting $R \rightarrow \infty$)
\begin{equation}
\int_{\mathbb{R}^n} D^2 f(\nabla u)(\nabla \partial_n u, \nabla \partial_n u) \Phi(\partial_n u)  dx < \infty.
\end{equation}
Next we return to (2.2) replacing $F, v, i$ by $f, u, n$ and choosing $\eta \in C_0^1(B_{2R})$, $\eta = 1$ on $B_R$, $|\nabla \eta| \le c/R$ and $0 \le \eta \le 1$.
The foregoing calculations show the boundedness of the second integral on the right-hand side of (2.2), 
whereas the first one vanishes as $R \rightarrow \infty$ on account of (2.10). This gives
\begin{equation}
D^2 f( \nabla u)(\nabla \partial_n u, \nabla \partial_n u) \Phi(\partial_n u) = 0 \quad \text{on } \mathbb{R}^n.
\end{equation}
On the set $[\partial_n u = 0]$ we have $\nabla \partial_n u = 0$ almost everywhere (see, e.g., \cite{GT2001}, p. 152, Lemma 7.7), 
whereas for $x \in [\partial_n u \neq 0]$ it holds $\Phi(\partial_n u(x)) \neq 0$, thus
\begin{equation}
f_i''(\partial_i u) (\partial_i \partial_n u)^2 = 0 \quad \text{on } [\partial_n u \neq 0]
\end{equation}
on account of (1.3) and (2.11). (2.12) gives $\partial_n \partial_n u = 0$. For $i \le n-1$ we see (again for $x \in [\partial_n u \neq 0]$) that
\begin{equation}
\partial_i  u(x) = 0 \quad \text{or} \quad \partial_i \partial_n u(x) = 0\end{equation}
and $\partial_n \partial_i u = 0$ is true almost everywhere on $[\partial_i u = 0]$. 
This shows $\partial_i \partial_n u = 0$ almost everywhere on $[\partial_n u \neq 0]$ now for $i = 1, \dots, n$, and therefore 
$\nabla \partial_n u = 0$ almost everywhere on $\mathbb{R}^n$.\\

The continuity of $\nabla \partial_n u$ shows $\nabla \partial_n u = 0$, and we find a constant $c_n$ such that $\partial_n u = c_n$. 
We have for $x = (x_1, \dots, x_n)$
\begin{equation*}
u(x) = u(x_1, \dots, x_{n-1}, 0) + c_n x_n
\end{equation*}
and the function $v(x_1, \dots, x_{n-1}) := u(x_1, \dots, x_{n-1}, 0)$ is an entire solution of (compare (1.5))
\begin{equation*}
\sum_{i=1}^{n-1} f_i''(\partial_i v) \partial_i \partial_i v = 0\end{equation*}
now on $\mathbb{R}^{n-1}$. Repeating the foregoing arguments, it follows that
\begin{equation*}
v(x_1, \dots, x_{n-1}) = v(x_1, \dots, x_{n-2}, 0) + c_{n-1} x_{n-1}.
\end{equation*}
After $n-1$ steps we end up with
\begin{equation*}u(x_1, \dots, x_n) = u(x_1, 0, \dots, 0) + \sum_{i=2}^n c_i x_i
\end{equation*}
the function $\gamma(t) := u(t, 0, \dots, 0)$ being a solution $\frac{d}{dt}(f_1'(\gamma')) = 0$ on $\mathbb{R}$. This gives the constancy of $\gamma'$ 
and we finally deduce our claim
\begin{equation*}
u(x) = \sum_{i=1}^n c_i x_i + c_0,\quad x \in \mathbb{R}^n, \text{ with suitable constants } c_0, c_1, \dots, c_n \in \mathbb{R}.
\end{equation*}\qed

\
\vfill

\begin{minipage}[t]{0.48\textwidth}

\textbf{Michael Bildhauer}\\
Saarland University\\
Department of Mathematics\\
P.O. Box 15 11 50\\
66041 Saarbrücken, Germany\\
\texttt{bibi@math.uni-sb.de}

\end{minipage}
\hfill
\begin{minipage}[t]{0.48\textwidth}

\textbf{Martin Fuchs}\\
Saarland University\\
Department of Mathematics\\
P.O. Box 15 11 50\\
66041 Saarbrücken, Germany\\
\texttt{fuchs@math.uni-sb.de}

\end{minipage}
\end{document}